\newtheorem{lem}{Lemma}
\newtheorem{thm}{Theorem}
\newtheorem*{un-thm}{Theorem}
\newtheorem{pro}{Proposition}
\theoremstyle{abstract}
\theoremstyle{remark}
\newtheorem*{rema}{Remark}
\newtheorem*{exa}{Example}
\theoremstyle{definition}
\newcommand{\R}{\mathbb{R}}
\makeatletter \@namedef{subjclassname@2010}{
  \textup{2010} Mathematics Subject Classification}
\begin{document}

\title[A Criterion for the Normality of Unbounded Operators]{A Criterion for the Normality of Unbounded Operators and Applications to Self-adjointness}
\author{Mohammed Hichem Mortad}

\address{Department of
Mathematics, University of Oran, B.P. 1524, El Menouar, Oran 31000.
Algeria.\newline {\bf Mailing address}:
\newline Dr Mohammed Hichem Mortad \newline BP 7085
Es-Seddikia\newline Oran
\newline 31013 \newline Algeria}

\email{mhmortad@gmail.com, mortad@univ-oran.dz.}

\begin{abstract}
In this paper we give and prove a criterion for the normality of
unbounded closed operators, which is a sort of a maximality result
which will be called "double maximality". As applications, we show,
under some assumptions, that the sum of two symmetric operators is
essentially self-adjoint; and that the sum of two unbounded normal
operators is essentially normal. Some other important results are
also established.

\end{abstract}

\subjclass[2010]{Primary 47A05; Secondary 47B25}

\keywords{Unbounded operators. Self-adjoint, closed and normal
operators. Operator sums.}

\thanks{Partially supported by "Laboratoire d'Analyse Mathématique et Applications".}

\maketitle

\section{Introduction}

First, we assume the reader is familiar with notions and results
about bounded and unbounded linear operators. Some general
references are \cite{Con,GGK,Kato-Book,Putnam-book,RUD,SCHMUDG-book-2012,WEI}.

Let us, however, recall the next known result which will be useful.

\begin{lem}[\cite{WEI}]\label{(AB)*=B*A*} If $A$ and $B$ are densely defined and $A$ is invertible with
inverse $A^{-1}$ in $B(H)$, then $(BA)^* =A^* B^*$.
\end{lem}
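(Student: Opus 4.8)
The plan is to establish the identity by the two inclusions $A^*B^* \subseteq (BA)^*$ and $(BA)^* \subseteq A^*B^*$ separately; the first is routine, while the second is where the invertibility hypothesis does all the work.

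Before the inclusions I would dispose of two preliminary points. First, since $A$ is densely defined and $A^{-1}\in B(H)$, the operator $A^*$ is again invertible with $(A^*)^{-1}=(A^{-1})^*\in B(H)$; this is the standard duality for inverses, obtained by taking adjoints in $A^{-1}A\subseteq I$ and $AA^{-1}=I$ and using that $A^{-1}$ is bounded and everywhere defined. Second, I must check that $BA$ is densely defined so that $(BA)^*$ makes sense. Because $A$ maps $D(A)$ bijectively onto $H$ with $A^{-1}$ bounded, one verifies directly that $D(BA)=A^{-1}(D(B))$, and this set is dense: if $z\perp A^{-1}(D(B))$ then $\langle y,(A^{-1})^*z\rangle=0$ for every $y\in D(B)$, so $(A^{-1})^*z=0$ by density of $D(B)$, whence $z=0$ since $(A^{-1})^*=(A^*)^{-1}$ is injective.

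The forward inclusion is immediate from the general product rule for adjoints: whenever $BA$ is densely defined one always has $A^*B^*\subseteq (BA)^*$, with no hypothesis on $A$ required.

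For the reverse inclusion, which I expect to be the main obstacle, the idea is to factor $B$ through $BA$. Since $AA^{-1}=I$ on $H$, a short domain computation gives the honest operator identity $(BA)A^{-1}=B$ (both sides defined exactly on $D(B)$). Taking adjoints and applying the general inclusion $(TS)^*\supseteq S^*T^*$ with $T=BA$ and $S=A^{-1}$ yields $B^*\supseteq (A^{-1})^*(BA)^*=(A^*)^{-1}(BA)^*$. Composing on the left with $A^*$ and invoking $A^*(A^*)^{-1}=I_H$ from the first preliminary step, I obtain $A^*B^*\supseteq (BA)^*$, which is precisely the reverse inclusion; together with the forward one this gives $(BA)^*=A^*B^*$. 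The only delicate points are keeping the domains straight in the factorization $(BA)A^{-1}=B$ and making sure that the invertibility of $A^*$ (not merely of $A$) is in hand, which is exactly why I would prove the inverse-duality statement at the very start.
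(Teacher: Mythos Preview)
Your argument is correct. The paper itself does not prove this lemma; it is quoted from Weidmann's book as a known result, so there is no ``paper's own proof'' to compare against. Your two-inclusion strategy is the standard one: the inclusion $A^*B^*\subseteq (BA)^*$ is automatic, and for the reverse you correctly exploit the factorization $(BA)A^{-1}=B$, take adjoints, and then left-multiply by $A^*$ using $A^*(A^*)^{-1}=I_H$. The domain bookkeeping (density of $D(BA)=A^{-1}(D(B))$ and the identity $(A^*)^{-1}=(A^{-1})^*$) is handled carefully and is exactly what is needed.
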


In order to avoid an eventual confusion for Banach algebraists, we
say here that an unbounded densely defined closeable operator $A$ is
essentially normal if its closure $\overline{A}$ is normal.

It is known that self-adjoint operators are maximally symmetric.
Hence, any closed operator $A$ such that $AA^*\subset A^*A$ is
automatically normal for $AA^*$ and $A^*A$ are both self-adjoint. It
is also well-known that normal operators are maximally normal. For
some other maximality results, see \cite{DevNussbaum-von-Neumann}
and \cite{Nussbaum-TAMS-commu-unbounded-normal-1969}.

However, there are some situations which one encounters sometimes
and which are not covered by any of the results cited above. For
instance what can we say about a closed or non closed densely
defined operator $S$ obeying $T\subset SS^*$ and $T\subset S^*S$
(where $T$ is just densely defined)? The answer will be given in
Theorem \ref{Main Theorem crucial S normal } below. Then we prove a
more general result. It mainly gives conditions on when $T\subset R$
and $T\subset S$ imply that $S=R$? See Theorem \ref{maximality
double S,T,R}.

As an important consequence of Theorem \ref{Main Theorem crucial S
normal }, we establish a new result on the essential
self-adjointness of the sum of two unbounded symmetric operators
(see Theorem \ref{essential self-adjointness A+B}).

Then we prove a result on the normality of the sum of two unbounded
normal operators (cf \cite{mortad-CAOT-sum-normal}). A certain form
of commutativity (not strong commutativity though) is required.

For the reader's convenience, let us gather in one theorem some of
the famous results on the self-adjointness of the sum of two
self-adjoint:

\begin{thm}\label{self-adjointness (essential s.a.) A+B all results}
Let $A$ and $B$ be two operators with domains $D(A)$ and $D(B)$
respectively, such that $A+B$ is densely defined \footnote{ Kosaki
\cite{KOS} gave explicit examples of unbounded densely defined
self-adjoint positive operators $A$ and $B$ such that $D(A)\cap
D(B)=\{0\}$.}. Then $A+B$ is:
\begin{enumerate}
  \item self-adjoint on $D(A)$ if $A$ and $B$ are self-adjoint, and
  if $B$ is bounded.
  \item self-adjoint on $D(A)\cap D(B)$ whenever $A$ and $B$ are
  commuting self-adjoint and positive operators (see \cite{Putnam-book}).
  \item self-adjoint on $D(A)\cap D(B)$ whenever $A$ and $B$ are
  anticommuting self-adjoint operators (see \cite{Vasilescu-1983-anti-commuting}).
  \item (Kato-Rellich) self-adjoint on $D(A)$ if $B$ is symmetric and $A$-bounded with relative bound
  $a<1$, and $A$ is self-adjoint (see \cite{RS2}).
\end{enumerate}
\end{thm}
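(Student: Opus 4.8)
The plan is to treat the four assertions separately, since they are classical and each rests on a different mechanism; in every case the underlying tool is the basic criterion that a densely defined symmetric operator $T$ is self-adjoint if and only if $\ran(T\pm i)=H$, equivalently $\ker(T^*\mp i)=\{0\}$. For the perturbative statements (1) and (4) I would combine this criterion with a Neumann-series argument, while for the algebraic statements (2) and (3) I would invoke the functional calculus attached to (anti)commuting self-adjoint operators.

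For (1), since $B$ is everywhere defined and bounded one has $D(A)\cap D(B)=D(A)$ and the standard identity $(A+B)^*=A^*+B^*$ for a bounded perturbation. As $A^*=A$ and $B^*=B$, this gives $(A+B)^*=A+B$ on $D(A)$, so $A+B$ is self-adjoint. For (4), the Kato--Rellich theorem, I would fix $\lambda>0$ and write, on $D(A)$,
\[
A+B\pm i\lambda=\bigl(I+B(A\pm i\lambda)^{-1}\bigr)(A\pm i\lambda).
\]
Using the relative bound $\|Bx\|\le a\|Ax\|+b\|x\|$ with $a<1$ together with the estimates $\|A(A\pm i\lambda)^{-1}\|\le 1$ and $\|(A\pm i\lambda)^{-1}\|\le 1/\lambda$ (both valid because $A$ is self-adjoint), one gets $\|B(A\pm i\lambda)^{-1}\|\le a+b/\lambda<1$ for $\lambda$ large. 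Hence $I+B(A\pm i\lambda)^{-1}$ is boundedly invertible by a Neumann series, so $A+B\pm i\lambda$ is surjective; since $A+B$ is evidently symmetric on $D(A)$, the range criterion delivers self-adjointness.

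For (2) and (3) I would pass through spectral theory. In (2), strong commutativity of the positive self-adjoint $A$ and $B$ means their resolvents commute, so there is a joint spectral measure; writing $A=\int s\,dE$ and $B=\int t\,dF$ with commuting $E,F$, the operator $A+B$ is unitarily realized as multiplication by $s+t$ on $D(A)\cap D(B)$, and positivity ensures $(s+t)^2\asymp s^2+t^2$ so that this domain is exactly the maximal domain of multiplication by $s+t$, where the latter is self-adjoint. In (3), anticommutativity (read through the unitary groups, or as $AB=-BA$ on a common core) makes the cross terms cancel, so that $(A+B)^2=A^2+B^2$ on $D(A^2)\cap D(B^2)$; since $A^2$ and $B^2$ are then positive and commuting, $A^2+B^2$ is self-adjoint by part (2), and a symmetric operator whose square is self-adjoint is itself self-adjoint, so $A+B$ is self-adjoint on $D(A)\cap D(B)$. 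These reproduce the arguments in \cite{Putnam-book} and \cite{Vasilescu-1983-anti-commuting} respectively.

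The main obstacle is conceptual rather than computational, and it lies in (2) and (3): for unbounded operators the words \emph{commuting} and \emph{anticommuting} must be taken in the strong (resolvent or unitary-group) sense rather than as a mere formal identity on a dense domain, and the delicate point is to justify both the construction of the joint spectral measure and the algebraic cancellation of cross terms on the correct domains. The perturbative parts (1) and (4), by contrast, become routine once the relative-bound estimate is set up.
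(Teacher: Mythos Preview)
The paper does not prove this theorem. It is stated in the Introduction purely as a survey of classical results (``For the reader's convenience, let us gather in one theorem some of the famous results\ldots''), with each item carrying a citation (\cite{Putnam-book}, \cite{Vasilescu-1983-anti-commuting}, \cite{RS2}) in lieu of argument. There is therefore no ``paper's own proof'' against which to compare your attempt.

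For the record, your sketches are essentially the standard arguments one finds in those references. Part (1) is immediate, and your Neumann-series treatment of (4) is exactly the textbook Kato--Rellich proof. Your outline of (2) via a joint spectral measure together with the equivalence $(s+t)^2\asymp s^2+t^2$ for $s,t\geq 0$ identifies the correct mechanism. For (3) your idea---reduce to $(A+B)^2=A^2+B^2$ and then apply (2) to the commuting positive pair $A^2,B^2$---is the right heuristic, and the implicit lemma ``a closed symmetric operator whose square is self-adjoint is itself self-adjoint'' is indeed valid (factor $T^2+\lambda^2=(T-i\lambda)(T+i\lambda)$ and use bijectivity). You also correctly identify where the real work lies in (2) and (3): one must show that strong anticommutativity of $A,B$ yields \emph{strong} commutativity of $A^2,B^2$, and that the formal identity $(A+B)^2=A^2+B^2$ holds as an equality of self-adjoint operators on the right domain rather than merely as an inclusion on a core; these are precisely the points Vasilescu's paper handles in detail.
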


For papers treating similar questions on sums, see
\cite{Mortad-CMB-2011,mortad-CAOT-sum-normal,Vasilescu-1983-anti-commuting}.

\section{Main Results}

The first important result of the paper is a "double maximality"
one:
\begin{thm}\label{Main Theorem crucial S normal }
Let $S,T$ be two densely defined unbounded operators on a Hilbert
space $H$ with respective domains $D(S)$ and $D(T)$. Assume that
\[\left\{\begin{array}{c}
                 T\subset S^*S, \\
                 T\subset SS^*,
               \end{array}
\right.\] Let $D\subset D(T)$ ($\subset D(S^*S)\cap D(SS^*)$) be
dense.
\begin{enumerate}
  \item Assume that $D$ is a core, for example, for $S^*S$. If $S$ is closed, then $S$ is
normal.
  \item  If $S$ is not closed, then $\overline{S}$  is normal if $D$
is a core for $S^*\overline{S}$.
\end{enumerate}
\end{thm}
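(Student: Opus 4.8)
The plan is to run the ``double maximality'' mechanism advertised in the introduction: for a closed densely defined operator, both $S^*S$ and $SS^*$ are self-adjoint by von Neumann's theorem, and a self-adjoint operator admits no proper symmetric extension. So the whole game is to use the two inclusions $T\subset S^*S$ and $T\subset SS^*$ to force $S^*S$ and $SS^*$ to coincide, and the role of the dense set $D$ and the core hypothesis is exactly to upgrade ``agreement on $D$'' into ``$S^*S\subset SS^*$''.

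The first, elementary, observation I would record is that for every $x\in D\subset D(T)$ the inclusions give $S^*Sx=Tx=SS^*x$; hence the restrictions $(S^*S)|_D$ and $(SS^*)|_D$ are literally the same operator $T|_D$. For part (1), since $S$ is closed, $S^*S$ and $SS^*$ are self-adjoint, in particular closed. The hypothesis that $D$ is a core for $S^*S$ means $\overline{(S^*S)|_D}=S^*S$. But $SS^*$ is a closed extension of $(SS^*)|_D=(S^*S)|_D$, so it contains the closure of $(S^*S)|_D$; that is, $S^*S\subset SS^*$. As $S^*S$ is self-adjoint and $SS^*$ is symmetric, maximal symmetry forces $S^*S=SS^*$, i.e. $S$ is normal.

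For part (2) I would simply transport the same argument to $\overline{S}$. The preliminary step is to move the hypotheses from $S$ to $\overline{S}$: using $S\subset\overline{S}$ together with $(\overline{S})^*=S^*$ (valid since $S$ is closable here), one checks the domain inclusions $S^*S\subset S^*\overline{S}$ and $SS^*\subset\overline{S}S^*$, whence $T\subset S^*\overline{S}$ and $T\subset\overline{S}S^*$. Now $\overline{S}$ is closed, so $S^*\overline{S}=(\overline{S})^*\overline{S}$ and $\overline{S}S^*=\overline{S}(\overline{S})^*$ are self-adjoint and agree with $T|_D$ on $D$; since $D$ is assumed to be a core for $S^*\overline{S}$, the reasoning of part (1) gives $S^*\overline{S}\subset\overline{S}S^*$ and then $S^*\overline{S}=\overline{S}S^*$, i.e. $\overline{S}$ is normal.

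The genuinely delicate point is not the maximality invocation (which is clean once the inclusion is in hand) but the transfer step in part (2): one must verify carefully, via domain bookkeeping, that $S\subset\overline{S}$ indeed yields $S^*S\subset S^*\overline{S}$ and $SS^*\subset\overline{S}S^*$, and that $(\overline{S})^*=S^*$, so that $S^*\overline{S}$ and $\overline{S}S^*$ really are the self-adjoint operators produced by von Neumann's theorem applied to the closed operator $\overline{S}$. I expect this identification of adjoints and domains to be where the only real care is required; everything after it reduces to the part-(1) argument.
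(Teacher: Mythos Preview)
Your proof is correct and follows essentially the same approach as the paper: agreement of $S^*S$ and $SS^*$ on $D$ (via $T$), the core hypothesis to upgrade this to $S^*S\subset SS^*$, and maximal symmetry of self-adjoint operators to conclude equality. The paper phrases the first step slightly differently---writing $0_{D(T)}=T-T\subset S^*S-SS^*$ and passing to adjoints to get $S^*S-SS^*\subset 0_H$---but the mechanism is identical, and your treatment of part~(2) via $T\subset S^*\overline{S}$, $T\subset\overline{S}S^*$ is if anything more transparent than the paper's sketch.
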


\begin{proof}\hfill
\begin{enumerate}
  \item Recall that $SS^*$ and $S^*S$ are both self-adjoint on the
respective domain $D(SS^*)$ and $D(S^*S)$. We clearly have
\[0_{D(T)}=T-T\subset S^*S-SS^*.\]
Hence
\[S^*S-SS^*\subset (S^*S-SS^*)^*\subset [0_{D(T)}]^*=0_H\]
If we denote the restriction of $S^*S$ to $D$ by $S^*S_D$, then
\begin{equation}\label{eq 1}
S^*S_{D}\subset S^*S-SS^*+SS^*\subset SS^*.\end{equation}

Since $D$ is a core for $S^*S$, we have
\[S^*S=\overline{S^*S_{D}}.\]
Since self-adjoint operators are maximally symmetric, passing to
closures in the inclusions (\ref{eq 1}) we see that we have

\[S^*S=SS^*,\] proving the normality of
$S$.
  \item If $S$ is not closed, the proof is very similar. We just observe that $S^*$ is densely
defined and that in this case
\[S^*\overline{S}-\overline{S}S^*\subset (S^*S-SS^*)^*\subset [0_{D(T)}]^*=0_H.\]
\end{enumerate}
\end{proof}

\begin{rema}
Of course, if we only have

\[\left\{\begin{array}{c}
                 T\subset S^*S, \\
                 T\subset SS^*,
               \end{array}
\right.\] (i.e. without any "core condition"), then $S$ is not
necessarily normal even if it is closed as seen in the following
example:
\end{rema}
\begin{exa}\label{Example Sandy}
Let $S$ be defined by $Sf(x)=-f'(x)$ on $D(S)=\{f\in L^2(0,1):~f'\in
L^2(0,1)\}$. Then $S$ is densely defined and closed, but it is not
normal. Indeed,
\[\text{$S^*f(x)=-f'(x)$ on $D(S^*)=\{f\in L^2(0,1):~f'\in
L^2(0,1),~f(0)=f(1)=0\}$}\] so that
\[SS^*f(x)=S^*Sf(x)=-f''(x)\]
with
\[D(SS^*)=\{f\in L^2(0,1):~f''\in
L^2(0,1),~f(0)=f(1)=0\}\] and
\[D(S^*S)=\{f\in L^2(0,1):~f''\in
L^2(0,1),~f'(0)=f'(1)=0\}.\] Hence $S$ is not normal. Let $T$ be
defined by $Tf(x)=-f''(x)$ on
\[D(T)=\{f\in L^2(0,1):~f''\in
L^2(0,1),~f(0)=f(1)=0=f'(0)=f'(1)\}.\] Then
\[\left\{\begin{array}{c}
                 T\subset S^*S, \\
                 T\subset SS^*,
               \end{array}
\right.\] $T$ and $S$ are densely defined, $S$ is closed, and yet as
we have just seen $S$ is not normal.
\end{exa}

Adopting the same proof as that of Theorem \ref{Main Theorem crucial S normal }, we can easily generalize the latter theorem to the following "double maximality" result which is, by the way, stronger than Theorem 5.31 in \cite{WEI}
(in \cite{WEI}, the coming $T$ was assumed to be symmetric and it was also assumed that $D(R)\subset D(S)$).
We leave the proof to the reader.

\begin{thm}\label{maximality double S,T,R}
Let $R,S,T$ be three densely defined unbounded operators on a
Hilbert space $H$ with respective domains $D(R)$, $D(S)$ and $D(T)$.
Assume that
\[\left\{\begin{array}{c}
                 T\subset R, \\
                 T\subset S.
               \end{array}
\right.\] Assume further that $R$ and $S$ are self-adjoint. Let
$D\subset D(T)$ ($\subset D(R)\cap D(S)$) be dense. Let $D$ be a
core, for instance, for $S$. Then $R=S$.
\end{thm}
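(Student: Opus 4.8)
The plan is to mimic the proof of Theorem \ref{Main Theorem crucial S normal } almost verbatim, replacing the specific pair $S^*S, SS^*$ by the abstract self-adjoint pair $R,S$. First I would observe that from $T\subset R$ and $T\subset S$ we obtain
\[
0_{D(T)} = T-T \subset R-S,
\]
so that $R-S$ extends the zero operator defined on $D(T)$. Taking adjoints and using that adjoints reverse inclusions, together with $[0_{D(T)}]^* = 0_H$ (which holds because $D(T)\supset D$ is dense, so the zero operator on $D(T)$ is densely defined and its adjoint is the everywhere-defined zero operator), I would deduce
\[
R-S \subset (R-S)^* \subset [0_{D(T)}]^* = 0_H.
\]
Here I use that $R-S$ is symmetric-like in the sense that $(R-S)^*$ extends $R-S$; since $R$ and $S$ are self-adjoint on their respective domains, the operator $R-S$ (defined on $D(R)\cap D(S)$) is symmetric, so $R-S\subset (R-S)^*$.

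Next I would restrict to the core. Writing $S_D$ for the restriction of $S$ to $D$, the inclusion $R-S\subset 0_H$ yields, upon adding $S$ back on $D$,
\begin{equation*}
S_D \subset R-S+S \subset R,
\end{equation*}
exactly paralleling inclusion (\ref{eq 1}) in the proof of Theorem \ref{Main Theorem crucial S normal }. Since $D$ is a core for $S$, passing to closures gives $S = \overline{S_D}$, and because $R$ is self-adjoint hence closed, the closure of $S_D$ is contained in $R$, so $S\subset R$. Finally, self-adjoint operators are maximally symmetric (indeed maximally self-adjoint), so the proper extension $S\subset R$ of one self-adjoint operator by another forces $R=S$.

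The only genuinely delicate point, and the step I expect to be the main obstacle, is justifying the adjoint computation $[0_{D(T)}]^* = 0_H$ and the inclusion $R-S\subset (R-S)^*$ with full rigor: one must check that $D(R)\cap D(S)$ is dense (which follows since it contains the dense set $D$) so that $R-S$ is densely defined and possesses an adjoint, and that the symmetry $R-S\subset(R-S)^*$ really holds in the unbounded setting where domains of differences behave subtly. Once these domain bookkeeping issues are settled, the maximality argument is immediate, and the hypothesis that $D$ is a core for $S$ (rather than for $R$) is exactly what is needed to close the inclusion in the correct direction. For this reason the proof is, as the authors remark, a direct transcription of the earlier argument, and I would likewise leave the routine verifications to the reader.
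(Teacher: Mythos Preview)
Your proposal is correct and follows precisely the route the paper intends: the authors explicitly state that Theorem \ref{maximality double S,T,R} is proved by ``adopting the same proof as that of Theorem \ref{Main Theorem crucial S normal }'' and leave the details to the reader, which is exactly the transcription you have carried out. Your handling of the domain issues (density of $D(R)\cap D(S)$, symmetry of $R-S$, and the core passage $S=\overline{S_D}\subset R$) is accurate and matches the template of the earlier proof line for line.
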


\begin{rema}
After I proved Theorem \ref{Main Theorem crucial S normal }, I came
across a result  due to Stochel-Szafraniec
\cite{Stochel-Szafraniec-2003-JMSJ-domination-commutativity} which I
was not aware of. The authors proved a similar result to mine. They
further assumed that $S^*S_D$ was essentially self-adjoint
(something obtained in my proof) but then they obtained in their
proof that $D$ is a core for $S^*S$ (something assumed in my
theorem). In fact, it is well-known that a linear subspace $D$ is core for a self-adjoint operator $A$ iff $A$ restricted to $D$ is essentially self-adjoint.
\end{rema}

Now we prove an interesting result on the sum of two symmetric
operators. This is the natural generalization of the known result on
the sum of two \textit{bounded} symmetric (or self-adjoint)
operators.

\begin{thm}\label{essential self-adjointness A+B}
Let $A$ and $B$ be two unbounded symmetric operators. Let $D$ be a
dense linear manifold contained in the domain $D[(A+B)^2]$. If $D$
is a core for $(A+B)^*(\overline{A+B})$, then $A+B$ is essentially
self-adjoint on $D$.
\end{thm}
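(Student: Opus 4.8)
The plan is to deduce this from the double-maximality result, Theorem \ref{Main Theorem crucial S normal }, applied to the (in general non-closed) operator $S=A+B$. First I would record two structural facts. Since $A$ and $B$ are symmetric, their sum $A+B$ is symmetric on $D(A)\cap D(B)$; being densely defined, it is therefore closable, so $S^*=(A+B)^*$ is densely defined and the closure $\overline{A+B}$ exists and is again symmetric, with $(A+B)^*=(\overline{A+B})^*$. The target is thus to show that $\overline{A+B}$ is self-adjoint, and in fact that $D$ is a core for it.

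The heart of the argument is to produce the operator $T$ required by Theorem \ref{Main Theorem crucial S normal }. Set $S=A+B$ and let $T=(A+B)^2$ on $D(T)=D[(A+B)^2]$. I claim $T\subset S^*S$ and $T\subset SS^*$. Writing $C=A+B$ and using symmetry in the form $C\subset C^*$, for $f\in D(C^2)$ one has $Cf\in D(C)\subset D(C^*)$, whence $C^*Cf=C(Cf)=C^2f$; this gives $C^2\subset C^*C=S^*S$. Symmetrically $f\in D(C)\subset D(C^*)$ with $C^*f=Cf\in D(C)$, so $CC^*f=C(Cf)=C^2f$, giving $C^2\subset CC^*=SS^*$. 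Both inclusions hold, and $T$ is densely defined because $D\subset D(T)$ is dense by hypothesis.

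With these inclusions in hand I would invoke Theorem \ref{Main Theorem crucial S normal }(2): since $A+B$ is generically not closed, $S^*$ is densely defined, and $D$ is by assumption a core for $S^*\overline{S}=(A+B)^*(\overline{A+B})$, the closure $\overline{A+B}$ is normal. Now $\overline{A+B}$ is at once symmetric and normal. A normal operator $N$ satisfies $D(N)=D(N^*)$, while symmetry gives $N\subset N^*$; together these force $N=N^*$. Hence $N:=\overline{A+B}$ is self-adjoint, i.e. $A+B$ is essentially self-adjoint.

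It remains to upgrade ``essentially self-adjoint'' to ``essentially self-adjoint on $D$'', and this is where I expect the only genuine care to be needed. Once $N$ is known self-adjoint, the operator $(A+B)^*(\overline{A+B})$ equals $N^2$, and the hypothesis says $D$ is a core for $N^2$. I would show that a core for $N^2$ is automatically a core for $N$: the elementary graph-norm estimate $\|Nx\|^2=\langle N^2x,x\rangle\le \tfrac12\left(\|N^2x\|^2+\|x\|^2\right)$ yields $\|\cdot\|_N\le\sqrt{3/2}\,\|\cdot\|_{N^2}$ on $D(N^2)$, so density of $D$ in the $N^2$-graph norm forces density in the $N$-graph norm; combined with the standard fact that $D(N^2)$ is itself a core for the self-adjoint $N$, this shows $D$ is a core for $N$. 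Finally, by the criterion recalled in the Remark above (a subspace is a core for a self-adjoint operator iff the operator restricted to it is essentially self-adjoint), $(A+B)|_D$ is essentially self-adjoint, which is the assertion. The main obstacle is precisely this last passage from $N^2$ back to $N$; the verification of the two inclusions and the invocation of Theorem \ref{Main Theorem crucial S normal } are routine.
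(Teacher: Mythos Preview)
Your argument follows the paper's proof essentially verbatim: set $S=A+B$, $T=(A+B)^2$, verify $T\subset S^*S$ and $T\subset SS^*$ from the symmetry inclusion $A+B\subset(A+B)^*$, invoke Theorem~\ref{Main Theorem crucial S normal }(2) with the core hypothesis on $D$ to get that $\overline{A+B}$ is normal, and finish with ``symmetric and normal implies self-adjoint''. The only difference is that the paper stops there and simply asserts ``essentially self-adjoint on $D$'', whereas you supply the missing justification: once $N=\overline{A+B}$ is self-adjoint, $(A+B)^*(\overline{A+B})=N^2$, and your graph-norm estimate together with the fact that $D(N^2)$ is a core for $N$ shows that a core for $N^2$ is a core for $N$, so $(A+B)|_D$ really is essentially self-adjoint. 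This extra paragraph is a genuine improvement over the paper's proof, which leaves that passage implicit.
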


\begin{rema}
The hypothesis $A$ and $B$ being symmetric is weak but the
assumption
 $(A+B)^2$ being densely defined is strong (where $A+B$ is symmetric).

Remember that Chernoff \cite{CH} (see also \cite{DIX,NAI,SCHMUDG})
gave an explicit example of a symmetric, closed and semi-bounded
operator $A$ satisfying $D(A^2)=\{0\}$ (hence $D(A^2)$ is far from
being dense!).

For further investigations on when $D(A^n)$ is dense, see the
intensive work \cite{SCHMUDG}.
\end{rema}

\begin{rema}
In practise, especially when dealing with partial differential
operators, it is usually fairly simple to find the appropriate dense
domain $D$. In this case, the domain par excellence is
$C_0^{\infty}(\R^n)$.
\end{rema}

We may exploit Chernoff's counterexample to show the importance of
the dense manifold $D$ in the foregoing theorem as seen in
\begin{exa}
Let $A$ be a closed and symmetric operator such that $D(A^2)=\{0\}$.
Hence $A$ is not self-adjoint (if it were, then $A^2$ would be
self-adjoint too and $D(A^2)$ would then be dense). Set $B=A$, then
\[D[(A+B)^2]=D(4A^2)=D(A^2)=\{0\}.\]
Hence $D[(A+B)^2]$ does not contain any dense set $D$. Finally, as
observed above,
\[\overline{A+B}=\overline{A+A}=2A\] is not
self-adjoint.
\end{exa}

Now we prove Theorem \ref{essential self-adjointness A+B}.
\begin{proof}
Since $D\subset D[(A+B)^2]$, $(A+B)^2$ is densely defined, and then
so is $A+B$. Hence $A+B$ is closeable for
\[A+B\subset A^*+B^*\subset (A+B)^*.\]
Then we have
\[A+B\subset (A+B)^*=(\overline{A+B})^*\Longrightarrow \overline{A+B}\subset (\overline{A+B})^*,\]
that is $\overline{A+B}$ is symmetric.  We also have
\[A+B\subset (A+B)^*\Longrightarrow
(A+B)^2\subset (A+B)^*(A+B).\]

Similarly,

\[(A+B)^2\subset (A+B)(A+B)^*.\]

Hence Theorem \ref{Main Theorem crucial S normal } implies that
$\overline{A+B}$ is normal. Since symmetric normal operators are
self-adjoint, we immediately deduce that $A+B$ is essentially
self-adjoint on $D$.
\end{proof}

We have the analog of Theorem \ref{essential self-adjointness A+B}
for a finite family of operators. The proof is left to the
interested reader.

\begin{thm}\label{essential self-adjointness A1+A2+...+An}
Let $(A_k)_{1\leq k\leq n}$ be a family of symmetric operators. Let
$D$ be a dense linear manifold contained in the domain
$D[(A_1+\cdots+A_n)^2]$. If $D$ is a core for
$(A_1+\cdots+A_n)^*(\overline{A_1+\cdots+A_n})$, then
$A_1+\cdots+A_n$ is essentially self-adjoint on $D$.
\end{thm}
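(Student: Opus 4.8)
The plan is to mimic the proof of Theorem \ref{essential self-adjointness A+B} essentially verbatim, just carrying the two-operator bookkeeping through to $n$ operators. Write $S=A_1+\cdots+A_n$ for brevity. First I would observe that, since $D\subset D[S^2]$ and $D$ is dense, $S^2$ is densely defined, and hence so is $S$ itself; therefore $S$ is closeable. The key algebraic step is that each $A_k$ is symmetric, so $A_k\subset A_k^*$, and summing these inclusions gives
\[
S=A_1+\cdots+A_n\subset A_1^*+\cdots+A_n^*\subset (A_1+\cdots+A_n)^*=S^*.
\]
Thus $S$ is symmetric, and passing to the closure yields $\overline{S}\subset (\overline{S})^*$, i.e. $\overline{S}$ is symmetric as well.

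Next I would produce the two domination inclusions required to invoke Theorem \ref{Main Theorem crucial S normal }. From $S\subset S^*$ one gets, exactly as in the two-operator case, that
\[
S^2\subset S^*S\quad\text{and}\quad S^2\subset SS^*.
\]
Setting $T=S^2=(A_1+\cdots+A_n)^2$, these say $T\subset S^*S$ and $T\subset SS^*$, with $T$ densely defined since $D\subset D(T)$ is dense. The core hypothesis of the theorem is precisely the stated assumption that $D$ is a core for $S^*\overline{S}=(A_1+\cdots+A_n)^*(\overline{A_1+\cdots+A_n})$.

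With all hypotheses of Theorem \ref{Main Theorem crucial S normal }(2) in place (the operator $S$ need not be closed, which is why we use the second part and the core condition on $S^*\overline{S}$), I would conclude that $\overline{S}$ is normal. Finally, since $\overline{S}$ is both symmetric and normal, and a symmetric normal operator is self-adjoint, $\overline{S}$ is self-adjoint; that is, $A_1+\cdots+A_n$ is essentially self-adjoint on $D$. I do not expect any genuine obstacle here: the only point needing a line of care is the inductive or telescoping verification that a finite sum of symmetric operators satisfies $S\subset S^*$ (the inclusion $A_1^*+\cdots+A_n^*\subset (A_1+\cdots+A_n)^*$ is the standard superadditivity of the adjoint for sums), after which the argument is identical to the $n=2$ case and reduces entirely to the already-proved main theorem.
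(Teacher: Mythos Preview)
Your proposal is correct and follows exactly the approach the paper intends: the paper explicitly states that this theorem is the analog of Theorem \ref{essential self-adjointness A+B} for a finite family and leaves the proof to the reader, and your argument is precisely that analog carried out in detail. The only additional bookkeeping beyond the two-operator case is the inclusion $A_1+\cdots+A_n\subset A_1^*+\cdots+A_n^*\subset (A_1+\cdots+A_n)^*$, which you correctly justify via the superadditivity of the adjoint.
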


It is a classic matter to take an unbounded normal operator $A$ and
then set $B=-A$ to see that $A+B$ is not normal (as it is not
closed). But in this case $A+B$ is essentially normal for its
closure is worth $"0"$ defined everywhere. This observation has
inspired me to give and prove the following result:

\begin{pro}
\label{normal of the closure A+B BETTER} Let $A$ and $B$ be two
unbounded normal operators obeying $AB^*=B^*A$. Assume that only $B$
is invertible. Let $D$ be a dense linear manifold contained in the
domains of $A^*A$, $B^*A$, $A^*B$ and $B^*B$. Then $A+B$ is
essentially normal on $D$ if $D$ is a core for $(A+B)^*(A+B)$.
\end{pro}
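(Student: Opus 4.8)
The plan is to reduce the statement to the ``double maximality'' Theorem \ref{Main Theorem crucial S normal } applied to $S=A+B$. Since $A$ and $B$ are normal they are closed, but their sum need not be closed, so I aim to produce a densely defined operator $T$ with
\[T\subset (A+B)^*(A+B)\quad\text{and}\quad T\subset (A+B)(A+B)^*,\]
and then invoke the second assertion of Theorem \ref{Main Theorem crucial S normal }, whose core hypothesis is exactly the one assumed here. The natural candidate is the restriction to $D$ of the formal expansion
\[T:=(A^*A+A^*B+B^*A+B^*B)|_{D},\]
which is densely defined because $D$ is dense and, by hypothesis, contained in $D(A^*A)\cap D(B^*A)\cap D(A^*B)\cap D(B^*B)$.

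Before the computation I would record the one extra relation that is needed, namely $A^*B\subset BA^*$. This is where the invertibility of $B$ enters. Since $B$ is boundedly invertible, so is $B^*$, and Lemma \ref{(AB)*=B*A*} applies to the product $AB^*$ (whose right factor $B^*$ is invertible) to give the \emph{exact} identity $(AB^*)^*=BA^*$. On the other hand one always has $(B^*A)^*\supset A^*B$. As the hypothesis $AB^*=B^*A$ is an equality of operators, $(AB^*)^*=(B^*A)^*$, and combining these yields $A^*B\subset BA^*$. Together with the normality identities $AA^*=A^*A$, $BB^*=B^*B$ and with $AB^*=B^*A$ itself, this is the entire operator algebra required. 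In particular the domain hypothesis then guarantees that all of $A^*A,AA^*,B^*B,BB^*,AB^*,B^*A,A^*B,BA^*$ act on $D$; for instance $D\subset D(B^*A)=D(AB^*)$ forces $B^*f\in D(A)$, and $D\subset D(A^*B)\subset D(BA^*)$ forces $A^*f\in D(B)$ for every $f\in D$, while $D(A^*A)=D(AA^*)$ and $D(B^*B)=D(BB^*)$ by normality.

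With these relations in hand the two inclusions are a direct expansion. For $f\in D$ one checks that $Af,Bf\in D(A^*)\cap D(B^*)$, so that $(A+B)f\in D(A^*+B^*)\subset D((A+B)^*)$ and
\[(A+B)^*(A+B)f=A^*Af+A^*Bf+B^*Af+B^*Bf=Tf.\]
For the other inclusion I would use that normality forces $D(A)=D(A^*)$ and $D(B)=D(B^*)$, whence $f\in D(A^*)\cap D(B^*)\subset D((A+B)^*)$ and $(A+B)^*f=A^*f+B^*f$; the relations $AA^*=A^*A$, $BB^*=B^*B$, $AB^*=B^*A$ and $A^*B\subset BA^*$ then give $A^*f+B^*f\in D(A)\cap D(B)$ together with
\[(A+B)(A+B)^*f=AA^*f+AB^*f+BA^*f+BB^*f=A^*Af+B^*Af+A^*Bf+B^*Bf=Tf.\]
Hence $T\subset (A+B)^*(A+B)$ and $T\subset (A+B)(A+B)^*$, and Theorem \ref{Main Theorem crucial S normal } yields that $\overline{A+B}$ is normal, i.e. $A+B$ is essentially normal on $D$.

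The main obstacle is the second paragraph: deriving the cross relation $A^*B\subset BA^*$ rigorously for unbounded operators and then pinning down, in the third paragraph, every domain membership needed to evaluate $(A+B)(A+B)^*$ termwise on $D$. This is precisely where the invertibility of $B$ is indispensable, for it is what allows Lemma \ref{(AB)*=B*A*} to compute the adjoint of a product \emph{exactly} rather than only up to an inclusion; without it the domain bookkeeping would not close and the identity $(A+B)^*(A+B)=(A+B)(A+B)^*$ on $D$ could not be established.
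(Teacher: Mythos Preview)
Your proposal is correct and follows essentially the same route as the paper. In both arguments one sets $Q=A^*A+B^*A+A^*B+B^*B$ (the paper on its natural domain, you on $D$), derives $A^*B\subset BA^*$ from $AB^*=B^*A$ via Lemma \ref{(AB)*=B*A*} using the invertibility of $B$, shows $Q$ sits below both $(A+B)^*(A+B)$ and $(A+B)(A+B)^*$, and then invokes Theorem \ref{Main Theorem crucial S normal }. The only cosmetic difference is that the paper writes the two inclusions at the operator level (e.g.\ $(A+B)(A+B)^*\supset (A+B)(A^*+B^*)\supset AA^*+AB^*+BA^*+BB^*\supset Q$) while you verify them elementwise on $D$; your version has the merit of making the domain bookkeeping fully explicit.
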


\begin{proof}
We have
\begin{align*}
(A+B)(A+B)^*&\supset (A+B)(A^*+B^*)\\
&=A(A^*+B^*)+B(A^*+B^*)\\
&\supset AA^*+AB^*+BA^*+BB^*.
\end{align*}

Similarly, we obtain
\[(A+B)^*(A+B)\supset A^*A+B^*A+A^*B+B^*B.\]

Since $AB^*=B^*A$ and $B$ is invertible, Lemma \ref{(AB)*=B*A*}
yields $A^*B\subset BA^*$. Hence
\[A^*A+B^*A+A^*B+B^*B\subset AA^*+AB^*+BA^*+BB^*.\]
Set $Q=A^*A+B^*A+A^*B+B^*B$. Then
\[ (A+B)(A+B)^*\supset Q\text{ and } (A+B)^*(A+B)\supset Q.\]
Since $Q$ is densely defined, Theorem
\ref{Main Theorem crucial S normal } yields the desired result.
\end{proof}

The last result of the paper concerns the normality of the sum too. It is well-known that the sum of two bounded commuting normal
operators is normal. This was generalized to the case where one
operator is unbounded (see \cite{mortad-CAOT-sum-normal}). It is
also known that the sum of two strongly anti-commuting unbounded
self-adjoint operators is self-adjoint (see
\cite{Vasilescu-1983-anti-commuting}). This result, however, cannot
be generalized to the case of two strongly anticommuting normal
operators even for the case of two bounded normal operators. Indeed,
let
\[A=\left(
      \begin{array}{cc}
        2 & 0 \\
        0 & -2 \\
      \end{array}
    \right)
 \text{ and } B=\left(
                  \begin{array}{cc}
                    0 & 1 \\
                    -1 & 0 \\
                  \end{array}
                \right)
 .\]
Then $A$ and $B$ are both normal. They anticommute because
\[AB=-BA=\left(
           \begin{array}{cc}
             0 & 2 \\
             2 & 0 \\
           \end{array}
         \right),
\]
but as one can easily check
\[A+B=\left(
      \begin{array}{cc}
        2 & 1 \\
        -1 & -2 \\
      \end{array}
    \right)\]
    is not normal.

Nevertheless, we have the following result:
\begin{thm}\label{normality sum anti-commuting B is BD}
Let $A$ be an unbounded normal operator and let $B$ be
bounded and self-adjoint. If $BA^*\subset -AB$, then $A+B$ is normal.
\end{thm}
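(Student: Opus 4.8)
The plan is to verify the normality of $A+B$ through the standard criterion: a densely defined closed operator $N$ is normal if and only if $D(N)=D(N^*)$ and $\|Nx\|=\|N^*x\|$ for every $x\in D(N)$. First I would record the easy structural facts. Since $A$ is normal it is closed with $D(A)=D(A^*)$, and since $B$ is bounded and everywhere defined, $A+B$ is closed on $D(A+B)=D(A)$ and $(A+B)^*=A^*+B^*=A^*+B$, whose domain is again $D(A^*)=D(A)$. Thus $D(A+B)=D((A+B)^*)$, the domain half of the criterion is automatic, and everything reduces to the norm identity. (Note that, unlike the earlier results, Theorem \ref{Main Theorem crucial S normal } is not needed here, since $A+B$ is already closed and we may aim at genuine normality.)

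Next I would expand, for $x\in D(A)$,
\[
\|(A+B)x\|^2-\|(A+B)^*x\|^2=2\re\langle Ax,Bx\rangle-2\re\langle A^*x,Bx\rangle,
\]
the terms $\|Ax\|^2$ and $\|A^*x\|^2$ cancelling because $\|Ax\|=\|A^*x\|$ for a normal operator. So it suffices to prove $\re\langle Ax,Bx\rangle=0$ and $\re\langle A^*x,Bx\rangle=0$ separately. The second is immediate from the hypothesis: the inclusion $BA^*\subset -AB$ means that for $x\in D(A^*)=D(A)$ one has $Bx\in D(A)$ and $BA^*x=-ABx$. Using $B=B^*$ and shifting $A$ across the inner product I would obtain $\langle A^*x,Bx\rangle=\langle BA^*x,x\rangle=-\langle ABx,x\rangle=-\langle Bx,A^*x\rangle=-\overline{\langle A^*x,Bx\rangle}$, so $\langle A^*x,Bx\rangle$ is purely imaginary and its real part vanishes.

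For the first term the hypothesis only controls $BA^*$, so the key step---the one I expect to be the main obstacle---is to produce the companion relation $BA\subset -A^*B$. Here I would invoke the generalized Fuglede--Putnam theorem for (possibly unbounded) normal operators: if $N_1,N_2$ are normal and $T$ is bounded with $TN_1\subset N_2T$, then $TN_1^*\subset N_2^*T$. Applying it with $N_1=A^*$, $N_2=-A$ (both normal) and $T=B$ turns $BA^*\subset -AB=(-A)B$ into $BA\subset(-A)^*B=-A^*B$. A purely elementary attempt to derive this by taking adjoints of $BA^*\subset -AB$ runs in a circle, which is why some normality input such as Fuglede--Putnam, or equivalently the polar decomposition of $A$, seems unavoidable. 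With $BA\subset -A^*B$ in hand, so that $A^*Bx=-BAx$ for $x\in D(A)$, the same computation as above gives $\langle Ax,Bx\rangle=\langle x,A^*Bx\rangle=-\langle x,BAx\rangle=-\langle Bx,Ax\rangle=-\overline{\langle Ax,Bx\rangle}$, whence $\re\langle Ax,Bx\rangle=0$ as well. Combining the two vanishing real parts yields $\|(A+B)x\|=\|(A+B)^*x\|$ on $D(A)=D((A+B)^*)$, and the criterion then shows that $A+B$ is normal.
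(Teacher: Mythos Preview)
Your argument is correct. The structural facts, the expansion of $\|(A+B)x\|^2-\|(A^*+B)x\|^2$, the purely-imaginary computations, and the appeal to Fuglede--Putnam to pass from $BA^*\subset -AB$ to $BA\subset -A^*B$ are all sound; the norm criterion for normality then finishes the job.

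Your route, however, is genuinely different from the paper's. The paper does not use the norm criterion at all. Instead it invokes a preliminary domain lemma (Lemma~\ref{normality sum bounded and unbounded domains!!!!}) to expand $(A+B)^*(A+B)$ and $(A+B)(A+B)^*$ as four-term sums, uses the same Fuglede--Putnam step you use to see that the cross terms in each are $\subset 0$, and hence obtains inclusions $(A+B)^*(A+B)\subset A^*A+B^2$ and $(A+B)(A+B)^*\subset AA^*+B^2$. Since both sides of each inclusion are self-adjoint, maximal symmetry forces equality, and the normality of $A$ then gives $(A+B)^*(A+B)=(A+B)(A+B)^*$. Your approach is more elementary in that it avoids the auxiliary domain lemma and the maximality step; the paper's approach buys a bit more, namely the explicit identification $(A+B)^*(A+B)=A^*A+B^2$, and it stays closer in spirit to the ``double maximality'' theme of the article.
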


The proof requires the following lemma whose proof is very akin to
the one in \cite{mortad-CAOT-sum-normal}.

\begin{lem}\label{normality sum bounded and unbounded domains!!!!}
Let $A$ be an unbounded normal operator with domain $D(A)$. Let $B$
be a bounded self-adjoint operator defined on a Hilbert space $\mathcal{H}$. If
$BA^*\subset -AB$, then
\[D[A^*(A+B)]=D(A^*A) \text{ and } D[A(A^*+B)]=D(AA^*),\]
so that
\[A^*(A+B)=A^*A+A^*B \text{ and } A(A^*+B)=AA^*+AB.\]
\end{lem}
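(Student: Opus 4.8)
The plan is to prove the two domain equalities by establishing inclusions in both directions, then read off the operator identities as immediate consequences. I would focus first on the claim $D[A^*(A+B)] = D(A^*A)$, since the second equality follows by an entirely symmetric argument after noting that $B=B^*$ and that the hypothesis $BA^*\subset -AB$ has a companion form.

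\medskip

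First I would unwind the definition of the domain of a composition. By definition, $x\in D[A^*(A+B)]$ means $x\in D(A+B)=D(A)$ (since $B$ is everywhere defined) and $(A+B)x\in D(A^*)$. Since $A$ is normal, $D(A)=D(A^*)$ and these two composite operators are closed. The key mechanical step is to show $(A+B)x\in D(A^*)$ if and only if $Ax\in D(A^*)$; because $B$ is bounded and self-adjoint, $Bx$ always lies in $\mathcal{H}$, but the delicate point is whether $Bx\in D(A^*)$ whenever $x\in D(A)$. Here is where the hypothesis $BA^*\subset -AB$ enters: I expect it forces $B$ to map $D(A^*)$ into $D(A)$ (equivalently $D(A)$ into $D(A^*)$ using normality), so that $Bx\in D(A^*)$ automatically, and then $(A+B)x=Ax+Bx\in D(A^*)$ iff $Ax\in D(A^*)$ iff $x\in D(A^*A)$. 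This gives $D[A^*(A+B)]=D(A^*A)$.

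\medskip

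The main obstacle will be extracting, from the bare inclusion $BA^*\subset -AB$, the mapping property that $B$ sends $D(A^*)$ into $D(A)$. The inclusion $BA^*\subset -AB$ literally says: for every $x\in D(A^*)$ one has $Bx\in D(A)$ and $ABx=-BA^*x$. This is precisely the statement I need, read in the correct direction, so the real work is to chase domains carefully and invoke the normality equality $D(A)=D(A^*)$ at the right moments so that "$Bx\in D(A)$" and "$Bx\in D(A^*)$" may be used interchangeably. I would take care that the argument is an equivalence, not merely one inclusion, so that the \emph{equality} of domains is genuinely established rather than just $D[A^*(A+B)]\supset D(A^*A)$.

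\medskip

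Once the domain equalities hold, the operator identities are immediate: on the common domain $D(A^*A)=D[A^*(A+B)]$ one simply computes $A^*(A+B)x=A^*Ax+A^*Bx$ for every $x$ in that domain, and similarly $A(A^*+B)x=AA^*x+ABx$ on $D(AA^*)$. For the second domain equality $D[A(A^*+B)]=D(AA^*)$, I would run the symmetric argument, replacing the role of $A$ by $A^*$ and using the dual form of the commutation relation; since $B=B^*$, taking adjoints in $BA^*\subset -AB$ should yield the analogous inclusion governing how $B$ interacts with $A$, which supplies exactly the mapping property needed on the other side. The whole proof is thus short once the single mapping property of $B$ is read correctly off the hypothesis, and I do not anticipate any subtlety beyond careful bookkeeping of domains.
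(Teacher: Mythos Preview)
Your approach is correct and essentially complete. The paper itself does not give a proof of this lemma, only the remark that it is ``very akin'' to one in \cite{mortad-CAOT-sum-normal}, so there is no detailed argument to compare against; your domain-chasing is exactly the natural way to proceed.

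One small correction to your plan for the second equality: you do \emph{not} need to take adjoints to obtain a ``dual form'' of the commutation relation. Indeed, taking adjoints in $BA^*\subset -AB$ is delicate when $B$ is not invertible (Lemma~\ref{(AB)*=B*A*} does not apply), and the general inclusion $(ST)^*\supset T^*S^*$ goes the wrong way. Fortunately none of this is needed: the hypothesis $BA^*\subset -AB$ \emph{literally} says that $B$ maps $D(A^*)$ into $D(A)$, which is precisely the mapping property required to show $D[A(A^*+B)]=D(AA^*)$. So the second equality is in fact the more direct of the two; it is the \emph{first} equality that requires the extra step of invoking $D(A)=D(A^*)$ from normality to convert ``$Bx\in D(A)$'' into ``$Bx\in D(A^*)$''. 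With that adjustment your argument goes through cleanly, and the operator identities follow immediately on the common domains as you indicate.
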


Now we prove Theorem \ref{normality sum anti-commuting B is BD}

\begin{proof}
Since $B$ is bounded (and self-adjoint), we have
\[A+B \text{ is closed and } (A+B)^*=A^*+B^*~(=A^*+B).\]
Now thanks to Lemma \ref{normality sum bounded and unbounded
domains!!!!} we have
\[(A+B)^*(A+B)=A^*A+A^*B+BA+B^2.\]
and
\[(A+B)(A+B)^*=AA^*+AB+BA^*+B^2.\]
Since $BA^*\subset -AB$, we have $BA\subset -A^*B$ (by the Fuglede-Putnam theorem), \text{ so
that } \[BA^*+AB\subset 0 \text{ and $BA+A^*B\subset 0$}.\]
Hence
\[(A+B)(A+B)^*=AA^*+AB+BA^*+BB^2\subset AA^*+B^2\]
and
\[(A+B)^*(A+B)=A^*A+A^*B+BA+B^2\subset A^*A+B^2.\]
Since $AA^*$ is self-adjoint and $B^2$ is self-adjoint (and
bounded), $AA^*+B^2$ is self-adjoint too. Since $A+B$ is closed,
$(A+B)(A+B)^*$ is self-adjoint. But, self-adjoint operators are
maximally symmetric. Thus
\[(A+B)(A+B)^*=AA^*+B^2.\]
Similarly, we obtain
\[(A+B)^*(A+B)=A^*A+B^2.\]
Therefore, and by the normality of both $A$ and $B$, we get that
\[(A+B)(A+B)^*=(A+B)(A+B)^*,\]
completing the proof.
\end{proof}

Very similarly, we also have

\begin{thm}\label{normality sum anti-commuting B is BD}
Let $A$ and $B$ be two normal operators such that only $B$ is
bounded. If $BA^*\subset -AB^*$ and $B^*A\subset -A^*B$, then $A+B$ is normal.
\end{thm}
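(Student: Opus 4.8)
The plan is to imitate the proof of the preceding theorem, the only genuinely new feature being that $B$ is now merely normal rather than self-adjoint, so that $B^*B$ and $BB^*$ must be kept apart until the very last line. First I would record the two structural consequences of $B$ being bounded: the operator $A+B$ is closed (a normal operator is closed and a bounded perturbation of a closed operator is closed), and, since $B$ is everywhere defined, one has the exact identity $(A+B)^*=A^*+B^*$ with $D[(A+B)^*]=D(A^*)$. This reduces the whole matter to comparing $(A+B)^*(A+B)$ with $(A+B)(A+B)^*$.

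Next I would set up the analog of Lemma \ref{normality sum bounded and unbounded domains!!!!} under the present hypotheses. The key observation is that the two assumed inclusions are really domain statements: $B^*A\subset -A^*B$ forces $Bx\in D(A^*)$ for every $x\in D(A)$, while $BA^*\subset -AB^*$ forces $B^*y\in D(A)$ for every $y\in D(A^*)$. With these two mapping properties one verifies that $D[(A+B)^*(A+B)]=D(A^*A)$ and $D[(A+B)(A+B)^*]=D(AA^*)$, so that on these domains
\[(A+B)^*(A+B)=A^*A+A^*B+B^*A+B^*B,\qquad (A+B)(A+B)^*=AA^*+AB^*+BA^*+BB^*.\]
I expect this domain bookkeeping to be the main obstacle; everything afterwards is formal. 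One could equally well isolate a clean lemma modelled word for word on Lemma \ref{normality sum bounded and unbounded domains!!!!} and simply quote it.

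The two hypotheses then cancel the cross terms: from $B^*A\subset -A^*B$ we get $A^*B+B^*A\subset 0$, and from $BA^*\subset -AB^*$ we get $AB^*+BA^*\subset 0$. Hence
\[(A+B)^*(A+B)\subset A^*A+B^*B\qquad\text{and}\qquad (A+B)(A+B)^*\subset AA^*+BB^*.\]
Since $A^*A$ and $AA^*$ are self-adjoint while $B^*B$ and $BB^*$ are bounded self-adjoint, both right-hand sides are self-adjoint; and since $A+B$ is closed and densely defined, both left-hand sides are self-adjoint too. Maximal symmetry then upgrades each inclusion to equality, giving $(A+B)^*(A+B)=A^*A+B^*B$ and $(A+B)(A+B)^*=AA^*+BB^*$.

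Finally I would invoke the normality of the two summands: $A$ normal gives $A^*A=AA^*$ and $B$ normal gives $B^*B=BB^*$, whence $A^*A+B^*B=AA^*+BB^*$. Combining this with the two equalities just obtained yields $(A+B)^*(A+B)=(A+B)(A+B)^*$, which is exactly the normality of $A+B$. Note that, in contrast to the self-adjoint case, no appeal to Fuglede--Putnam is needed here, precisely because both anticommutation relations are assumed outright.
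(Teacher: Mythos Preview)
Your proposal is correct and is precisely the adaptation the paper has in mind when it says ``Very similarly'': you replay the argument of the preceding theorem with $B^*$ kept distinct from $B$, use the two assumed inclusions in place of the single one plus Fuglede--Putnam, and conclude via maximal symmetry and the normality of $A$ and $B$. Your remark that Fuglede--Putnam becomes unnecessary here is exactly the point of assuming both anticommutation relations, and your domain bookkeeping (that $B:D(A)\to D(A^*)$ and $B^*:D(A^*)\to D(A)$, whence $D[(A+B)^*(A+B)]=D(A^*A)$ and $D[(A+B)(A+B)^*]=D(AA^*)$) is the correct analog of Lemma~\ref{normality sum bounded and unbounded domains!!!!}.
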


\end{document}